\newtheorem{theorem}{Theorem}
\newtheorem{corollary}[theorem]{Corollary}
\newtheorem{proposition}[theorem]{Proposition}
\newenvironment{proof}[1][Proof]{\noindent\textbf{#1.} }{\ \rule{0.5em}{0.5em}}
\begin{document}

\title{Inequalities and Asymptotics for some Moment Integrals }
\author{Faruk Abi-Khuzam\thanks{%
2010 \textit{Mathematics Subject Classification. Primary }52A20}\thanks{%
\textit{Key words and phrases}. Ball's inequality,asymptotics,cube slicing,
moments.} \\
Department of Mathematics\\
American University of Beirut\\
Beirut, Lebanon}
\maketitle

\begin{abstract}
For $\alpha >\beta -1>0$, we obtain two sided inequalities for the moment
integral $I(\alpha ,\beta )=$ $\int_{%
\mathbb{R}
}|x|^{-\beta }|\sin x|^{\alpha }dx$ .These are then used to give the exact
asymptotic behavior of the integral as $\alpha \rightarrow \infty $. The
case $I(\alpha ,\alpha )$ corresponds to the asymptotics of Ball's
inequality, and $I(\alpha ,[\alpha ]-1)$ corresponds to a kind of novel
"oscillatory" behavior.
\end{abstract}

\section{\protect\bigskip Introduction}

Ball's integral inequality \cite{KB} , in connection with cube-slicing in $%
\mathbb{R}
^{n}$, says that for all $s\geq 2,$%
\begin{equation*}
\int_{-\infty }^{\infty }|\frac{\sin (\pi x)}{\pi x}|^{s}dx\leq \sqrt{\frac{2%
}{s}},\text{ or }\int_{-\infty }^{\infty }|\frac{\sin x}{x}|^{s}dx\leq \pi 
\sqrt{\frac{2}{s}},
\end{equation*}%
with strict inequality except when $s=2$. In particular, it suggests that
the integral decays like $\frac{1}{\sqrt{s}}$ as $s\rightarrow \infty $, and
this is made precise by the following asymptotic \cite{NP} :%
\begin{equation*}
\lim_{s\rightarrow \infty }\sqrt{\frac{s}{2}}\int_{-\infty }^{\infty }|\frac{%
\sin (\pi x)}{\pi x}|^{s}dx=\sqrt{\frac{3}{\pi }}.
\end{equation*}%
Since $\sqrt{\frac{3}{\pi }}<1$, the asymptotic result implies the
inequality for large values of $s$. But there are no known "easy" proofs of
the inequality for the full range of values,the main difficulty being near
small values of $s$. e.g. between$\ 2$ and $4$.\cite{NP}.$\ $The asymptotic
result, though reasonably tame, presents new difficulties when we consider a
more general integral, and this is circumvented here by the proof of two new
inequalities.

Our purpose here is to consider a generalization involving the " moment"
integral 
\begin{equation*}
I(\alpha ,\beta )=\int_{-\infty }^{\infty }\frac{|\sin x|^{\alpha }}{%
\left\vert x\right\vert ^{\beta }}dx,\alpha >\beta -1>0.
\end{equation*}%
We shall obtain useful upper and lower bounds for this integral, and use
them to obtain the asymptotic behavior of this integral. In addition, the
inequalities obtained are indispensible in obtaining the asymptotic
behavior, especially in the interesting "oscillatory" cases, $I(\alpha
,[\alpha ])$ if $\alpha \geq 2$, and $I([\alpha ],\alpha -1)$ if $\alpha >1$%
, where $[\alpha ]$ is the greatest integer in $\alpha $. The oscillatory
behavior makes it impossible to employ the standard methods used in
connection with Ball's inequality.

We place no restrictions on the indices $\alpha $ and $\beta $ beyond those
necessary to ensure the convergence of the integral $I(\alpha ,\beta )$.
Indeed, the condition $\beta >1$ implies convergence in a nbhd of $\infty
\allowbreak $, and near $0$, the inequality $\frac{|\sin x|^{\alpha }}{%
\left\vert x\right\vert ^{\beta }}\leq |x|^{\alpha -\beta }$ implies
convergence, since $\alpha -\beta >-1$.

\section{\protect\bigskip Weaker versions of Ball's Inequality}

A natural way to deal with Ball's inequality is to apply the sharp form of
the Hausdorff-Young inequality \cite{WB}. This leads to two inequalities for
the relevant integral: the first works for all $s\geq 2$, but falls short of
the required inequality by supplying the larger constant $\sqrt{e}$ in place
of $\sqrt{2}.$ The second gives a constant smaller than $\sqrt{2}$ but only
works for $s\geq 4.$

\begin{proposition}
(a) If $s\geq 2$,then 
\begin{equation*}
\int_{-\infty }^{\infty }|\frac{\sin (\pi x)}{\pi x}|^{s}dx\leq \frac{1}{%
\sqrt{s}}\cdot \sqrt{\left( 1+\frac{1}{s-1}\right) ^{s-1}}<\sqrt{\frac{e}{s}}%
.
\end{equation*}%
(b) If $s\geq 4,q=\frac{s}{2}$, and $p$ is the index conjugate to $q,$ then%
\begin{equation*}
\int_{%
\mathbb{R}
}\left\vert \left( \frac{\sin \pi \xi }{\pi \xi }\right) \right\vert
^{s}\leq \sqrt{\frac{2}{s}}\left( \frac{2\sqrt{p}}{p+1}\right) ^{q/p}<\sqrt{%
\frac{2}{s}}.
\end{equation*}
\end{proposition}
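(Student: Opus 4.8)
The plan is to apply the sharp Hausdorff–Young inequality to the function whose Fourier transform produces the $\operatorname{sinc}$ kernel, exploiting the fact that the characteristic function of an interval has $\widehat{\mathbf 1}_{[-1/2,1/2]}(\xi) = \frac{\sin(\pi\xi)}{\pi\xi}$. Writing $s = 2q$ so that the integrand is $|\widehat{\mathbf 1}_{[-1/2,1/2]}|^{2q}$, I would interpret $\int_{\mathbb R}|\frac{\sin\pi\xi}{\pi\xi}|^{s}\,d\xi$ as $\|\widehat{\mathbf 1}_{[-1/2,1/2]}\|_{L^{2q}}^{2q}$, and then bound the $L^{2q}$ norm of the transform against the $L^{p'}$ norm of $\mathbf 1_{[-1/2,1/2]}$, where the exponents are arranged so that the $L^{p'}$ norm is trivially computable (the indicator of a unit interval has all $L^{p'}$ norms equal to $1$). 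The sharp constant in Hausdorff–Young is the Babenko–Beckner constant, which is exactly the source of the factors $\left(1+\frac1{s-1}\right)^{(s-1)/2}$ in part (a) and $\left(\frac{2\sqrt p}{p+1}\right)^{q/p}$ in part (b).

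For part (a) I would set up Hausdorff–Young with the conjugate pair chosen so that the output exponent on the transform is exactly $s$; tracking the Beckner constant raised to the appropriate power yields the factor $\sqrt{(1+\frac1{s-1})^{s-1}}$ after collecting the $\frac{1}{\sqrt s}$ from the $L^{p'}$-normalization of the interval. The final inequality $\sqrt{(1+\frac1{s-1})^{s-1}}<\sqrt e$ is then just the elementary fact that $(1+\frac1{n})^{n}$ increases to $e$, so $(1+\frac1{s-1})^{s-1}<e$ for all finite $s$. For part (b) the idea is the same but with a specific choice $q=\frac s2$ and $p$ conjugate to $q$ in the $L^p\to L^{p'}$ formulation, which is only legitimate when $q\ge 2$, i.e. $s\ge 4$ — this is precisely why part (b) carries the restriction $s\ge4$ while part (a), using a different exponent assignment, survives down to $s=2$. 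The remaining strict inequality $\left(\frac{2\sqrt p}{p+1}\right)^{q/p}<1$ reduces to showing $\frac{2\sqrt p}{p+1}<1$, i.e. $2\sqrt p<p+1$, which is the AM–GM statement $(\sqrt p-1)^2>0$ for $p\neq1$.

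The bookkeeping obstacle — and the step I expect to be most delicate — is choosing and keeping straight the two different exponent assignments so that the Beckner constant lands with exactly the stated powers, since Hausdorff–Young can be applied with several conjugate configurations and only the right one produces a clean $\frac1{\sqrt s}$ or $\sqrt{2/s}$ prefactor. Concretely I would verify that in (a) the transform is taken in $L^{s}$ against the datum in $L^{s/(s-1)}$, and confirm the Beckner constant $\bigl(p^{1/p}/p'^{1/p'}\bigr)^{1/2}$ specializes correctly; in (b) I would instead use $q=s/2$ with its conjugate $p$ and check that squaring or otherwise manipulating the $L^{2q}$ norm of the transform recovers the displayed form. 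Once the constants are pinned down, both concluding strict inequalities are elementary one-line estimates, so the entire difficulty is concentrated in the correct invocation and normalization of the sharp Hausdorff–Young inequality rather than in any analytic estimate.
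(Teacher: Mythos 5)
Your treatment of part (a) is essentially the paper's argument: apply the sharp Hausdorff--Young (Babenko--Beckner) inequality to $\chi=\chi_{(-1/2,1/2)}$, whose $L^{p}$ norms all equal $1$, with the transform measured in $L^{s}$ and the datum in $L^{s/(s-1)}$; the resulting constant $\left(p^{1/p}s^{-1/s}\right)^{s/2}$ simplifies to $\frac{1}{\sqrt{s}}\sqrt{\left(1+\frac{1}{s-1}\right)^{s-1}}$, and the monotone increase of $\left(1+\frac{1}{n}\right)^{n}$ to $e$ gives the final strict bound. That part is fine.

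Part (b), however, has a genuine gap: you never identify the function to which Hausdorff--Young is to be applied, and that choice is the whole point. If you keep working with the indicator $\chi$ and merely reorganize the exponent as $s=2q$, you are still estimating $\Vert\hat{\chi}\Vert_{s}$ against $\Vert\chi\Vert_{s'}=1$, and you recover exactly the bound of part (a), whose constant tends to $\sqrt{e}>\sqrt{2}$; ``squaring or otherwise manipulating the $L^{2q}$ norm of the transform'' cannot produce a constant below $\sqrt{2/s}$ from a datum of unit norm. The paper instead applies the sharp inequality to the convolution $g=\chi\ast\chi$, the tent function $g(x)=\max(1-|x|,0)$, using that $\hat{g}(\xi)=\left(\frac{\sin\pi\xi}{\pi\xi}\right)^{2}$, so that $\int|\hat{\chi}|^{s}=\Vert\hat{g}\Vert_{q}^{q}$ with $q=s/2$, and --- crucially --- that $\Vert g\Vert_{p}^{p}=\frac{2}{p+1}<1$. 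The factor $\left(\frac{2\sqrt{p}}{p+1}\right)^{q/p}$, which you attribute entirely to the Beckner constant, is in fact the product of the piece $(\sqrt{p})^{q/p}$ coming from $(p^{1/p})^{q/2}$ and the piece $\left(\frac{2}{p+1}\right)^{q/p}$ coming from $\Vert g\Vert_{p}^{q}$; without the tent function there is nothing to offset the $\sqrt{p}$ and the prefactor does not drop below $1$. The restriction $s\geq 4$ then arises because Hausdorff--Young requires the output exponent $q=s/2$ to be at least $2$. Your concluding elementary steps ($\left(1+\frac{1}{s-1}\right)^{s-1}<e$ and $2\sqrt{p}<p+1$ for $p\neq 1$) are correct, but the central device of part (b) is missing.
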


\begin{proof}
\textit{For part (a),} \textit{let }$\chi =\chi _{(-1/2,1/2)}$\textit{\ be
the characteristic function of the interval }$(-1/2,1/2)$\textit{. Then its
Fourier transform is given by }$\hat{\chi}(\xi )=\frac{\sin \pi \xi }{\pi
\xi }.$\textit{\ Applying the sharp Hausdorff-Young inequality \cite{WB}, }$%
\Vert \hat{\chi}\Vert _{s}\leq C_{p}\Vert \chi \Vert _{p}$\textit{, where }$%
s\geq 2,$\textit{\ }$p=s^{\prime }$\textit{, the index conjugate to }$s$%
\textit{, and }$C_{p}$\textit{\ is given by }$C_{p}^{2}=p^{1/p}(s)^{-1/s},$%
\textit{\ we obtain}%
\begin{equation*}
\int_{R}|\frac{\sin \pi \xi }{\pi \xi }|^{s}d\xi \leq \left(
p^{1/p}s^{-1/s}\right) ^{s/2}.
\end{equation*}%
\textit{It now remains to compute}%
\begin{equation*}
\left( p^{1/p}s^{-1/s}\right) ^{s/2}=\frac{1}{\sqrt{s}}\cdot \left( \frac{s}{%
s-1}\right) ^{\frac{s-1}{2}}=\frac{1}{\sqrt{s}}\cdot \sqrt{\left( 1+\frac{1}{%
s-1}\right) ^{s-1}}<\sqrt{\frac{e}{s}},s\geq 2,
\end{equation*}%
\textit{and the inequality in (a) follows.}
\end{proof}

\textit{To prove part (b), we employ the convolution }$g=\chi \ast \chi $%
\textit{\ of the same characteristic function. A simple computation gives }%
\begin{equation*}
g(x)=\left\{ 
\begin{array}{cc}
1-x & 0\leq x\leq 1 \\ 
1+x & -1\leq x\leq 0 \\ 
0 & |x|\geq 1%
\end{array}%
\right\} .
\end{equation*}%
\textit{Now }$\hat{g}(\xi )=\left( \frac{\sin \pi \xi }{\pi \xi }\right)
^{2} $\textit{, }$\Vert g\Vert _{p}^{p}=\frac{2}{p+1}$\textit{, and an
application of the sharp-Hausdorff-Young inequality gives, for }$q\geq 2$%
\textit{, and the conjugate index }$p=q^{\prime }$\textit{, }%
\begin{equation*}
\int_{%
\mathbb{R}
}\left\vert \left( \frac{\sin \pi \xi }{\pi \xi }\right) ^{2}\right\vert
^{q}\leq \left( p^{1/p}q^{-1/q}\right) ^{q/2}\left( \frac{2}{p+1}\right)
^{q/p}=q^{-1/2}\left( \frac{2\sqrt{p}}{p+1}\right) ^{q/p}<\frac{2\sqrt{p}}{%
p+1}\cdot \sqrt{\frac{1}{q}}.
\end{equation*}%
\textit{Since }$\frac{2\sqrt{p}}{p+1}<1,$ we \textit{obtain the inequality, }%
$\int_{-\infty }^{\infty }|\frac{\sin (\pi x)}{\pi x}|^{s}dx<\sqrt{\frac{2}{s%
}}$\textit{\ for all }$s\geq 4$\textit{.}

\section{Main Results}

In this section we consider the question of obtaining upper and lower bounds
for the more general integral, namely $\int_{-\infty }^{\infty }\frac{|\sin
x|^{\alpha }}{\left\vert x\right\vert ^{\beta }}dx$. Those bounds are then
used to obtain the precise asymptotic behaviour of the integral as $\alpha
\rightarrow \infty $. In addition, the bounds make it possible to employ
discontinuous functions such as $[\alpha ]$ in place of $\beta $, and then
the asymptotic result also captures the precise oscillations in the values
of the integral, as $\alpha \rightarrow \infty $.

\begin{theorem}
Suppose $\alpha >\beta -1>0$, and put 
\begin{equation*}
I(\alpha ,\beta )=\int_{%
\mathbb{R}
}\frac{|\sin t|^{\alpha }}{|t|^{\beta }}dt=2\int_{0}^{\infty }\frac{|\sin
t|^{\alpha }}{|t|^{\beta }}dt,
\end{equation*}%
and 
\begin{equation*}
\phi (\alpha ,\beta )=\frac{\alpha ^{\frac{\alpha -\beta +1}{2}}\Gamma
(\alpha +1)}{\Gamma (\frac{\alpha -\beta +1}{2}+\alpha +1)},
\end{equation*}%
where $\Gamma $ is the gamma-function.Then
\end{theorem}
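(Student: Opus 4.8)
The plan is to sandwich the integrand between two elementary functions whose $\alpha$-th powers integrate in closed form to Beta/Gamma expressions, and then to recognize those expressions as $\phi(\alpha,\beta)$. Throughout I would write $\gamma=\frac{\alpha-\beta+1}{2}$, so the hypothesis $\alpha>\beta-1>0$ is exactly $0<\gamma<\frac{\alpha}{2}$ and guarantees convergence. Using $I(\alpha,\beta)=2\int_0^\infty$ and nonnegativity of the integrand, I would isolate the first arch $(0,\pi)$, where $\sin t>0$ and the useful pointwise inequalities live, and treat $(\pi,\infty)$ as an error term.

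For the lower bound I would use $\sin t\ge t-\tfrac{t^3}{6}=t\bigl(1-\tfrac{t^2}{6}\bigr)$, whose right side is nonnegative on $[0,\sqrt6]$. Discarding the positive integral over $[\sqrt6,\infty)$ gives $I(\alpha,\beta)\ge 2\int_0^{\sqrt6}t^{\alpha-\beta}\bigl(1-\tfrac{t^2}{6}\bigr)^\alpha\,dt$, and the substitution $u=t^2/6$ turns this into $6^\gamma B(\gamma,\alpha+1)=6^\gamma\Gamma(\gamma)\tfrac{\Gamma(\alpha+1)}{\Gamma(\alpha+\gamma+1)}$. Since the definition of $\phi$ gives precisely $\tfrac{\Gamma(\alpha+1)}{\Gamma(\alpha+\gamma+1)}=\alpha^{-\gamma}\phi(\alpha,\beta)$, this is exactly $\Gamma(\gamma)\bigl(\tfrac6\alpha\bigr)^\gamma\phi(\alpha,\beta)$. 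This is the step where the exact shape of $\phi$ falls out of a Beta integral.

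For the upper bound I would start from Euler's product $\tfrac{\sin t}{t}=\prod_{n\ge1}\bigl(1-\tfrac{t^2}{n^2\pi^2}\bigr)$. Taking logarithms, using $\log(1-x)\le -x$ together with $\sum_{n\ge1}(n\pi)^{-2}=\tfrac16$, yields $\tfrac{\sin t}{t}\le e^{-t^2/6}$ on $(0,\pi)$; alternatively, keeping only the first factor gives $\tfrac{\sin t}{t}\le 1-\tfrac{t^2}{\pi^2}$. Raised to the power $\alpha$ and integrated against $t^{-\beta}$ over $(0,\pi)$, the Gaussian bound (after extending the range to $(0,\infty)$) integrates to $\tfrac12(6/\alpha)^\gamma\Gamma(\gamma)$, giving the sharp constant $6$, while the $1-t^2/\pi^2$ bound integrates by the same substitution to $\tfrac12(\pi^2/\alpha)^\gamma\Gamma(\gamma)\,\phi$, preserving the clean $\phi$-form at the cost of the larger constant $\pi^2$. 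The leftover range $(\pi,\infty)$ is controlled crudely by $|\sin t|\le1$, contributing $2\int_\pi^\infty t^{-\beta}\,dt=\tfrac{2\pi^{1-\beta}}{\beta-1}$.

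The main obstacle is the tension between cleanliness and sharpness in the upper bound. The inequality $\tfrac{\sin t}{t}\le e^{-t^2/6}$ holds only on $(0,\pi)$ and genuinely fails beyond the first zero (the later arches of $|\sin t|$ rise above the Gaussian), so the split at $\pi$ and a separate tail estimate are unavoidable; one must then verify that the tail $\tfrac{2\pi^{1-\beta}}{\beta-1}$ is of strictly smaller order than the main term $\Gamma(\gamma)(6/\alpha)^\gamma$, since it decays like $\pi^{-\beta}$ (exponentially) against the polynomial factor $\alpha^{-\gamma}$. Because $\phi(\alpha,\beta)\to 1$ as $\alpha\to\infty$ (from $\tfrac{\Gamma(\alpha+1)}{\Gamma(\alpha+\gamma+1)}\sim\alpha^{-\gamma}$), the lower and upper constants both collapse to $6$ and the two inequalities pinch $I(\alpha,\beta)$ to its exact asymptotic value. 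A minor technical point, used to keep the two sides consistent, is the exact identity $\tfrac{\Gamma(\alpha+1)}{\Gamma(\alpha+\gamma+1)}=\alpha^{-\gamma}\phi$ and the bound $\phi\le 1$ (e.g. via Gautschi's inequality), which ensures the Beta-type lower bound indeed sits below the Gaussian upper bound.
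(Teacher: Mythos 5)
Your lower bound and the main term of your upper bound coincide exactly with the paper's argument: the two-sided estimate $1-\tfrac{t^{2}}{6}\le \tfrac{\sin t}{t}\le e^{-t^{2}/6}$ on $[0,\pi]$ (the paper derives the Gaussian side from Euler's product and the AM--GM inequality, essentially your logarithmic argument), the Beta integral $2\int_{0}^{\sqrt{6}}t^{\alpha-\beta}(1-\tfrac{t^{2}}{6})^{\alpha}\,dt=(6/\alpha)^{\gamma}\Gamma(\gamma)\,\phi(\alpha,\beta)$ with $\gamma=\tfrac{\alpha-\beta+1}{2}$, and the extended Gaussian integral $2\int_{0}^{\infty}t^{\alpha-\beta}e^{-\alpha t^{2}/6}\,dt=(6/\alpha)^{\gamma}\Gamma(\gamma)$. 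Those computations are correct as you set them up.

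The genuine gap is your treatment of the tail $(\pi,\infty)$. The theorem's upper bound is \emph{multiplicative}, $I(\alpha,\beta)\le\{1+\tfrac{1}{\beta-1}\}(6/\alpha)^{\gamma}\Gamma(\gamma)$, and the paper obtains it by translating each arch back to $[0,\pi]$, writing $\int_{k\pi}^{(k+1)\pi}|\sin t|^{\alpha}t^{-\beta}\,dt=\int_{0}^{\pi}|\sin t|^{\alpha}(t+k\pi)^{-\beta}\,dt$ and using $t+k\pi\ge(k+1)t$ for $0\le t\le\pi$, so that the $k$-th arch is at most $(k+1)^{-\beta}$ times the \emph{first-arch} integral; summing gives the factor $1+\sum_{k\ge1}(k+1)^{-\beta}\le 1+\tfrac{1}{\beta-1}$, and the $|\sin t|^{\alpha}$ decay is never discarded. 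Your bound $|\sin t|\le 1$ on the tail produces instead the additive, $\alpha$-independent term $\tfrac{2\pi^{1-\beta}}{\beta-1}$, which yields a valid but \emph{different} inequality: to recover the stated one you would need $2\pi^{1-\beta}\le(6/\alpha)^{\gamma}\Gamma(\gamma)$ for all admissible $(\alpha,\beta)$, and this fails, e.g., for $\beta$ close to $1$ and $\alpha=7$, where $2\pi^{1-\beta}$ is close to $2$ while $(6/7)^{3.5}\Gamma(3.5)\approx 1.94$. Your fallback, "verify the tail is of smaller order than the main term," is an asymptotic statement, and even as such it works only when $\alpha-\beta$ stays bounded (the smallness of $\pi^{1-\beta}$ comes from $\beta\to\infty$, not from any decay in $\alpha$); it cannot deliver the clean inequality claimed for every $\alpha>\beta-1>0$, which is precisely what the paper needs later to handle the oscillatory cases. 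Replacing your tail estimate by the arch-by-arch comparison closes the gap; the rest of your outline (including the identity $\Gamma(\alpha+1)/\Gamma(\alpha+\gamma+1)=\alpha^{-\gamma}\phi$ and the remark $\phi\le 1$) is sound.
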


\begin{equation*}
\left( \frac{6}{\alpha }\right) ^{\frac{\alpha -\beta +1}{2}}\Gamma (\frac{%
\alpha -\beta +1}{2})\phi (\alpha ,\beta )\leq I(\alpha ,\beta )\leq \left( 
\frac{6}{\alpha }\right) ^{\frac{\alpha -\beta +1}{2}}\Gamma \left( \frac{%
\alpha -\beta +1}{2}\right) \left\{ 1+\frac{1}{\beta -1}\right\} .
\end{equation*}%
\textit{In particular, if }$\beta =\alpha $, then%
\begin{equation*}
\sqrt{\frac{6}{\alpha }}\frac{\sqrt{\alpha }\Gamma (\alpha +1)}{\Gamma
(\alpha +\frac{3}{2})}\sqrt{\pi }\leq I(\alpha ,\alpha )\leq \left( \sqrt{%
\frac{6}{\alpha }}\right) \sqrt{\pi }\left\{ 1+\frac{1}{\alpha -1}\right\} .
\end{equation*}

\begin{proof}
\textit{We need first the following double inequality,}
\end{proof}

\begin{equation*}
1-\frac{x^{2}}{6}\leq \frac{\sin x}{x}\leq e^{-\frac{x^{2}}{6}},0\leq x\leq
\pi .
\end{equation*}%
\textit{The left-hand inequality is easily proved by calculus. It will be
used with }$0\leq x\leq \sqrt{6}$\textit{\ }$.$\textit{For the right-hand
inequality, since }$0\leq x\leq \pi $\textit{, we may use the inequality
between the geometric and arithmetic mean of positive numbers to obtain} 
\begin{equation*}
\dprod\limits_{k=1}^{n}\left( 1-\frac{x^{2}}{\pi ^{2}k^{2}}\right) \leq
\left( 1-\frac{x^{2}}{\pi ^{2}n}\sum_{k=1}^{n}\frac{1}{k^{2}}\right)
^{n}\leq \exp \left( -\frac{x^{2}}{\pi ^{2}}\sum_{k=1}^{n}\frac{1}{k^{2}}%
\right) .
\end{equation*}%
\textit{Letting }$n\rightarrow \infty $\textit{, and recalling the product
representation of the sine function, and }$\sum_{k=1}^{\infty }\frac{1}{k^{2}%
}=\frac{\pi ^{2}}{6}$\textit{, we obtain the second inequality. The next
step is to compare the full integral in the theorem to an integral over the
interval }$[0,\sqrt{6}]$, \textit{or over} $[0,\pi ].$\textit{\ }%
\begin{equation*}
\int_{0}^{\sqrt{6}}\frac{|\sin t|^{\alpha }}{|t|^{\beta }}dt\leq
\int_{0}^{\pi }\frac{|\sin t|^{\alpha }}{|t|^{\beta }}dt\leq
\int_{0}^{\infty }\frac{|\sin t|^{\alpha }}{|t|^{\beta }}dt
\end{equation*}%
\begin{equation*}
=\int_{0}^{\pi }\frac{|\sin t|^{\alpha }}{|t|^{\beta }}dt+\sum_{k=1}^{\infty
}\int_{k\pi }^{(k+1)\pi }\frac{|\sin t|^{\alpha }}{|t|^{\beta }}dt
\end{equation*}%
\begin{equation*}
=\int_{0}^{\pi }\frac{|\sin t|^{\alpha }}{|t|^{\beta }}dt+\sum_{k=1}^{\infty
}\int_{0}^{\pi }\frac{|\sin t|^{\alpha }}{|t+k\pi |^{\beta }}dt
\end{equation*}%
\begin{equation*}
\leq \int_{0}^{\pi }\frac{|\sin t|^{\alpha }}{|t|^{\beta }}dt\left\{
1+\sum_{k=1}^{\infty }\frac{1}{(k+1)^{\beta }}\right\}
\end{equation*}%
\begin{equation*}
\leq \left\{ 1+\frac{1}{\beta -1}\right\} \int_{0}^{\pi }\frac{\sin ^{\alpha
}t}{t^{\beta }}dt.
\end{equation*}%
\textit{Using} \textit{the above inequalities for }$\frac{\sin x}{x}$\textit{%
, }%
\begin{equation*}
\int_{0}^{\sqrt{6}}t^{\alpha -\beta }\left( 1-\frac{t^{2}}{6}\right)
^{\alpha }dt\leq \int_{0}^{\pi }\frac{\sin ^{\alpha }t}{t^{\beta }}dt\leq
\int_{0}^{\pi }t^{\alpha -\beta }\exp \left( -\frac{\alpha t^{2}}{6}\right)
dt.
\end{equation*}%
\textit{Simple substitutions to change variables bring this double
inequality to the form}%
\begin{equation*}
\frac{1}{2}6^{\frac{\alpha -\beta +1}{2}}\int_{0}^{1}x^{\frac{\alpha -\beta
-1}{2}}(1-x)^{\alpha }dx\leq \int_{0}^{\pi }\frac{\sin ^{\alpha }t}{t^{\beta
}}dt\leq \frac{1}{2}\left( \frac{6}{\alpha }\right) ^{\frac{\alpha -\beta +1%
}{2}}\int_{0}^{\frac{\pi ^{2}\alpha }{6}}x^{\frac{\alpha -\beta -1}{2}%
}e^{-x}dx.
\end{equation*}%
\textit{If we extend the right most integral to }$[0,\infty )$\textit{, and
then express both sides through the gamma function, we arrive at }%
\begin{equation*}
\frac{1}{2}6^{\frac{\alpha -\beta +1}{2}}\frac{\Gamma (\frac{\alpha -\beta +1%
}{2})\Gamma (\alpha +1)}{\Gamma (\frac{\alpha -\beta +1}{2}+\alpha +1)}\leq
\int_{0}^{\pi }\frac{\sin ^{\alpha }t}{t^{\beta }}dt\leq \frac{1}{2}\left( 
\frac{6}{\alpha }\right) ^{\frac{\alpha -\beta +1}{2}}\Gamma (\frac{\alpha
-\beta +1}{2}).
\end{equation*}%
\textit{This gives the first inequalities for }$I(\alpha ,\beta )$\textit{,
and so, the inequalities for }$I(\alpha ,\alpha )$\textit{.}

\begin{corollary}
Let $I(\alpha ,\beta )$ be the integral in the theorem.
\end{corollary}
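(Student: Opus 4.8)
The plan is to extract the asymptotics directly from the two-sided estimate of the theorem, exploiting the fact that its upper and lower bounds share the common factor $M(\alpha ,\beta )=\left( \frac{6}{\alpha }\right) ^{\frac{\alpha -\beta +1}{2}}\Gamma \left( \frac{\alpha -\beta +1}{2}\right) $ and differ only through the correction factors $\phi (\alpha ,\beta )$ and $1+\frac{1}{\beta -1}$. Dividing the theorem's inequality through by $M(\alpha ,\beta )$ isolates the dimensionless quotient and reduces everything to the single display
\begin{equation*}
\phi (\alpha ,\beta )\leq \frac{I(\alpha ,\beta )}{M(\alpha ,\beta )}\leq 1+\frac{1}{\beta -1}.
\end{equation*}
Thus it suffices to understand the two flanking quantities as $\alpha \rightarrow \infty $, and then to conclude by squeezing.

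The upper flank is the easy one: in each regime relevant to the paper the second index tends to infinity (this holds for $\beta =\alpha $, for $\beta =[\alpha ]$, and for the paired integral whose lower index is $\alpha -1$), so $1+\frac{1}{\beta -1}\rightarrow 1$. The lower flank $\phi (\alpha ,\beta )$ is where the work lies. Writing $\gamma =\frac{\alpha -\beta +1}{2}$, one has $\phi (\alpha ,\beta )=\alpha ^{\gamma }\,\Gamma (\alpha +1)/\Gamma (\alpha +\gamma +1)$, a ratio of Gamma functions balanced against a power of $\alpha $. The decisive step is the classical estimate $\Gamma (\alpha +1)/\Gamma (\alpha +\gamma +1)=\alpha ^{-\gamma }(1+O(\gamma ^{2}/\alpha ))$, which follows from Stirling's formula (equivalently from $\Gamma (z+a)/\Gamma (z+b)\sim z^{a-b}$). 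Multiplying by $\alpha ^{\gamma }$ gives $\phi (\alpha ,\beta )\rightarrow 1$, and the two flanks together pinch the quotient to $1$, yielding the asymptotic equivalence $I(\alpha ,\beta )\sim M(\alpha ,\beta )$.

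To recover Ball's asymptotic I would then specialize to $\beta =\alpha $, where $\gamma =\frac{1}{2}$, $\Gamma (\frac{1}{2})=\sqrt{\pi }$, and $\phi (\alpha ,\alpha )=\sqrt{\alpha }\,\Gamma (\alpha +1)/\Gamma (\alpha +\frac{3}{2})\rightarrow 1$. Here both bounds of the theorem collapse onto $M(\alpha ,\alpha )=\sqrt{6\pi /\alpha }$, so $I(\alpha ,\alpha )\sim \sqrt{6\pi /\alpha }$; after the substitution $t=\pi x$ this is precisely the Nazarov--Podkorytov relation $\sqrt{s/2}\int_{-\infty }^{\infty }|\frac{\sin \pi x}{\pi x}|^{s}\,dx\rightarrow \sqrt{3/\pi }$ quoted in the introduction.

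The step I expect to be the main obstacle is making the estimate for $\phi $ uniform across the oscillatory regimes. When $\beta =[\alpha ]$ (and likewise for the companion integral), the exponent $\gamma =\frac{\alpha -\beta +1}{2}$ does not converge but sweeps across a bounded interval as the fractional part of $\alpha $ runs over $[0,1)$, so $M(\alpha ,\beta )$ genuinely oscillates through $(6/\alpha )^{\gamma }\Gamma (\gamma )$. The saving feature is that the error $O(\gamma ^{2}/\alpha )$ in the Gamma-ratio estimate is uniform for $\gamma $ in any compact set; hence $\phi \rightarrow 1$ uniformly and the squeeze survives. The care required is therefore to carry every estimate uniformly in $\gamma $ rather than for a fixed $\gamma $, so that the limit relation records the oscillation of $M(\alpha ,\beta )$ instead of averaging it away.
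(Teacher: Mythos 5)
Your proposal is correct and takes essentially the same route as the paper: both divide the theorem's two-sided bound by the common factor $\left( \frac{6}{\alpha }\right) ^{\frac{\alpha -\beta +1}{2}}\Gamma \left( \frac{\alpha -\beta +1}{2}\right) $ and squeeze, the only real work being to show $\phi (\alpha ,\beta )\rightarrow 1$ uniformly when $\alpha -\beta $ is merely bounded. The paper accomplishes this via Stirling's formula together with the elementary bound $(1+\frac{c+1}{2\alpha })^{\alpha }\leq e^{\frac{c+1}{2}}$, which carries the same content as your uniform Gamma-ratio estimate $\Gamma (\alpha +1)/\Gamma (\alpha +\gamma +1)=\alpha ^{-\gamma }(1+O(\gamma ^{2}/\alpha ))$ for $\gamma $ in a compact set.
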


(a)\textit{\ }I\textit{f }$\alpha -\beta =c$\textit{\ is held constant,
while }$\alpha \rightarrow \infty $\textit{, then }%
\begin{equation*}
\lim_{\alpha \rightarrow \infty }\alpha ^{\frac{c+1}{2}}I(\alpha ,\beta )=6^{%
\frac{c+1}{2}}\Gamma (\frac{c+1}{2}),c>-1.
\end{equation*}%
\textit{In particular, the asymptotic for the integral in Ball's inequality
is} 
\begin{equation*}
\lim_{\alpha \rightarrow \infty }\sqrt{\alpha }I(\alpha ,\alpha )=\sqrt{6\pi 
}.
\end{equation*}

(b) \textit{If }$\alpha -\beta =c$, and $c$ \textit{\ remains bounded as }$%
\alpha \rightarrow \infty ,$\textit{\ then }%
\begin{equation*}
I(\alpha ,\beta )\backsim \left( \frac{6}{\alpha }\right) ^{\frac{\alpha
-\beta +1}{2}}\Gamma \left( \frac{\alpha -\beta +1}{2}\right) ,\alpha
\rightarrow \infty .
\end{equation*}%
\textit{In particular,}%
\begin{equation*}
I(\alpha ,[\alpha ])\backsim \left( \frac{6}{\alpha }\right) ^{\frac{\alpha
-[\alpha ]+1}{2}}\Gamma \left( \frac{\alpha -[\alpha ]+1}{2}\right) ,\alpha
\rightarrow \infty .
\end{equation*}

\begin{proof}
\textit{(a) In the very special case where }$\beta =\alpha $,\textit{\
Stirling's formula gives }%
\begin{equation*}
\phi (\alpha ,\alpha )=\frac{\sqrt{\alpha }\Gamma (\alpha +1)}{\Gamma
(\alpha +\frac{3}{2})}\thicksim \frac{\alpha ^{\alpha +1}e^{1/2}}{(\alpha
+1/2)^{\alpha +1}}=\frac{e^{1/2}}{(1+\frac{1}{2\alpha })^{\alpha +1}}%
\thicksim 1,\alpha \rightarrow \infty .
\end{equation*}%
\textit{From this, the case where }$\alpha -\beta =c$\textit{, a constant,
is handled similarly :}%
\begin{equation*}
\phi (\alpha ,\beta )=\frac{\alpha ^{\frac{c+1}{2}}\Gamma (\alpha +1)}{%
\Gamma (\frac{c+1}{2}+\alpha +1)}\thicksim \frac{\alpha ^{^{\alpha +\frac{c+1%
}{2}+\frac{1}{2}}}e^{-\alpha }}{(\alpha +\frac{c+1}{2})^{\alpha +\frac{c+1}{2%
}+\frac{1}{2}}e^{-(\alpha +\frac{c+1}{2})}}
\end{equation*}%
\begin{equation*}
=\frac{e^{\frac{c+1}{2}}}{(1+\frac{c+1}{2\alpha })^{\alpha +\frac{c+1}{2}+%
\frac{1}{2}}}\thicksim 1,\alpha \rightarrow \infty .
\end{equation*}
\end{proof}

(c) \textit{If }$\alpha -\beta =c>-1$, and\textit{\ }$c$ \textit{is only
bounded, then Stirling's formula followed by the inequality }$(1+\frac{c+1}{%
2\alpha })^{\alpha }\leq e^{\frac{c+1}{2}}$, gives\textit{\ }%
\begin{equation*}
\liminf_{\alpha \rightarrow \infty }\phi (\alpha ,\beta )=\liminf_{\alpha
\rightarrow \infty }\frac{\alpha ^{\frac{c+1}{2}}\Gamma (\alpha +1)}{\Gamma (%
\frac{c+1}{2}+\alpha +1)}
\end{equation*}%
\begin{equation*}
=\liminf_{\alpha \rightarrow \infty }\frac{e^{\frac{c+1}{2}}}{(1+\frac{c+1}{%
2\alpha })^{\alpha +\frac{c+1}{2}+\frac{1}{2}}}\geq \liminf_{\alpha
\rightarrow \infty }\frac{1}{(1+\frac{c+1}{2\alpha })^{\frac{c+1}{2}+\frac{1%
}{2}}}=1.
\end{equation*}%
\textit{So that }%
\begin{equation*}
\liminf_{\alpha \rightarrow \infty }\left[ \left( \frac{6}{\alpha }\right) ^{%
\frac{\alpha -\beta +1}{2}}\Gamma (\frac{\alpha -\beta +1}{2})\right]
^{-1}I(\alpha ,\beta )\geq 1.
\end{equation*}%
\textit{The corresponding }$\limsup $\textit{\ being clearly }$\leq 1$%
\textit{, we obtain }%
\begin{equation*}
I(\alpha ,\beta )\backsim \left( \frac{6}{\alpha }\right) ^{\frac{\alpha
-\beta +1}{2}}\Gamma \left( \frac{\alpha -\beta +1}{2}\right) ,\alpha -\beta 
\text{ \textit{bounded},}\beta \rightarrow \infty .
\end{equation*}

\section{Conclusion}

We conclude by a generalization of the asymptotic result for a class of
infinite products. Let $g$ be a function having an infinite product
representation of the form

\begin{equation*}
g(t)=\dprod\limits_{n=1}^{\infty }\left( 1-\frac{t^{2}}{t_{n}^{2}}\right) ,
\end{equation*}%
where $t_{n}>0$, and $c=\sum_{n=1}^{\infty }t_{n}^{-2}<\infty .$ We are
interested in investigating $\lim_{p\rightarrow \infty }\int_{0}^{\infty }%
\frac{|g(t)|^{p}}{t^{\beta }}dt$, where $0\leq \beta <1$. Two examples of
such a function are: 
\begin{equation*}
f(x)=\int_{0}^{1}\cos (xt)h(t)dt,J_{0}(x)=\frac{2}{\pi }\int_{0}^{1}\frac{%
\cos xt}{\sqrt{1-t^{2}}}dt.
\end{equation*}%
The first function $f$ $\ $was considered in \cite{KK} in connection with
maximal measures of sections of the $n$-cube. The second is the Bessel
function of order $0$.

We first review the case where $\beta =0$. If $0\leq t\leq t_{1}$, we need
two inequalities analogous to those obtained for the sinc function. If $%
0<a_{i}<1$, then we use the double inequality%
\begin{equation*}
1-(a_{1}+a_{2}+...+a_{n})\leq \dprod\limits_{k=1}^{n}\left( 1-a_{k}\right)
\leq \left( 1-\frac{1}{n}\sum_{k=1}^{n}a_{k}\right) ^{n},
\end{equation*}%
to obtain 
\begin{equation*}
\left( 1-t^{2}\sum_{k=1}^{n}t_{k}^{-2}\right) \leq
\dprod\limits_{k=1}^{n}\left( 1-\frac{t^{2}}{t_{k}^{2}}\right) \leq \left( 1-%
\frac{t^{2}}{n}\sum_{k=1}^{n}t_{k}^{-2}\right) ^{n}\leq \exp \left(
-t^{2}\sum_{k=1}^{n}t_{k}^{-2}\right) ,
\end{equation*}%
and letting $n\rightarrow \infty $, get 
\begin{equation*}
1-ct^{2}\leq |g(t)|\leq e^{-ct^{2}},c=\sum_{k=1}^{\infty }t_{k}^{-2},
\end{equation*}%
where the left inequality is used when $0\leq t\leq \frac{1}{\sqrt{c}}$, and
the right inequality when $0\leq t\leq t_{1}$.

The left-hand inequality gives 
\begin{equation*}
\int_{0}^{\infty }|g(t)|^{p}dt\geq \int_{0}^{\frac{1}{\sqrt{c}}%
}(1-ct^{2})^{p}dt=\frac{1}{2\sqrt{c}}\int_{0}^{1}(1-x)^{p}x^{-1/2}dx=\frac{1%
}{2\sqrt{c}}\frac{\Gamma (p+1)\sqrt{\pi }}{\Gamma (p+\frac{3}{2})}.
\end{equation*}%
By Stirling's formula we obtain 
\begin{equation*}
\liminf_{p\rightarrow \infty }\sqrt{p}\int_{-\infty }^{\infty
}|g(t)|^{p}dt\geq \sqrt{\frac{\pi }{c}},
\end{equation*}%
which suggests that the order of decay of the integral is $\frac{1}{\sqrt{p}}
$, and so leads naturally a consideration of $\sqrt{p}\int_{0}^{\infty
}|g(t)|^{p}dt=\int_{0}^{\infty }|g(\frac{t}{\sqrt{p}})|^{p}dt$. Now use of
two sided inequalities gives 
\begin{equation*}
(1-c\frac{t^{2}}{p})^{p}\leq |g(\frac{t}{\sqrt{p}})|^{p}\leq e^{-ct^{2}},
\end{equation*}%
where the left-hand inequality holds true for $0\leq t\leq \frac{\sqrt{p}}{%
\sqrt{c}},$ and the rigt-hand inequality holds true for $0\leq t\leq t_{1}%
\sqrt{p}$. It now becomes possible to use, exactly as done in \cite{NP},
Lebesgue's dominated convergence theorem to conclude that actually $%
\lim_{p\rightarrow \infty }\sqrt{p}\int_{-\infty }^{\infty }|g(t)|^{p}dt=%
\sqrt{\frac{\pi }{c}}$.

In the general case where $0<\beta <1$, if we were to try the same approach,
we would need to know beforehand the expected rate of decay. Thus using one
of the inequalities above, we obtain 
\begin{equation*}
\int_{0}^{\infty }\frac{|g(t)|^{p}}{t^{\beta }}dt\geq \int_{0}^{\frac{1}{%
\sqrt{c}}}(1-ct^{2})^{p}t^{-\beta }dt=\frac{1}{2\sqrt{c^{1-\beta }}}%
\int_{0}^{1}(1-x)^{p}x^{-\frac{1+\beta }{2}}dx
\end{equation*}%
\begin{equation*}
=\frac{1}{2\sqrt{c^{1-\beta }}}\frac{\Gamma (p+1)\Gamma (\frac{1-\beta }{2})%
}{\Gamma (\frac{1-\beta }{2}+p+1)},
\end{equation*}%
leading to a sharp lower asymptotic, namely 
\begin{equation*}
\liminf_{p\rightarrow \infty }p^{\frac{1-\beta }{2}}\int_{0}^{\infty }\frac{%
|g(t)|^{p}}{t^{\beta }}dt\geq \frac{\Gamma (\frac{1-\beta }{2})}{2\sqrt{%
c^{1-\beta }}}\text{.}
\end{equation*}%
Once again this suggests that the expected decay is like $p^{\frac{\beta -1}{%
2}}$. So we make the substitution \ $t=(1-\beta )^{\frac{1}{1-\beta }%
}p^{-1/2}x^{\frac{1}{1-\beta }\text{ }},$ and find that 
\begin{equation*}
p^{\frac{1-\beta }{2}}\int_{0}^{\infty }\frac{|g(t)|^{p}}{t^{\beta }}%
=\int_{0}^{\infty }\left\vert g\left( \frac{(1-\beta )^{\frac{1}{1-\beta }%
}x^{\frac{1}{1-\beta }\text{ }}}{\sqrt{p}}\right) \right\vert ^{p}dx.
\end{equation*}%
The inequalities 
\begin{equation*}
\left( 1-c\frac{(1-\beta )^{\frac{2}{1-\beta }}x^{\frac{2}{1-\beta }\text{ }}%
}{p}\right) ^{p}\leq \left\vert g\left( \frac{(1-\beta )^{\frac{1}{1-\beta }%
}x^{\frac{1}{1-\beta }\text{ }}}{\sqrt{p}}\right) \right\vert ^{p}\leq \exp
\left( -c(1-\beta )^{\frac{2}{1-\beta }}x^{\frac{2}{1-\beta }\text{ }}\right)
\end{equation*}%
make it possible to use Lebesgue's theorem and we arrive at the asymptotic%
\begin{equation*}
\lim_{p\rightarrow \infty }p^{\frac{1-\beta }{2}}\int_{0}^{\infty }\frac{%
|g(t)|^{p}}{t^{\beta }}dt=\frac{\Gamma (\frac{1-\beta }{2})}{2\sqrt{%
c^{1-\beta }}}.
\end{equation*}

\subsection{Contributions and Competing Interests and contributions}

The author declares that there are no other contributors to this article,
and that he has no competing interests.


\begin{thebibliography}{WB}
\bibitem[WB]{WB} W. Beckner, Inequalities in Fourier Analysis, Amer. Journal
of Math., (2) 102 (1975). no.1, 159-182.

\bibitem[KB]{KB} K. Ball,Cube Slicing in $%
\mathbb{R}
^{n}$, Proc.Amer. Math. Soc.,Vol.97, 3(1986),465-473

\bibitem[NP]{NP} F.L.Nazarov and A.N.Podkorytov, Ball, Haagerup, and
distribution functions, Complex analysis, operators, and related topics.
Oper. Theory Adv. Appl., vol.113, Birkhauser, Basel, 2000, pp.247-267.

\bibitem[KK]{KK} H. Konig and A. Koldobsky, On the maximal measure of
sections of the $n$-cube, Contemporary Mathematics, Vol.599 (2013), 123-155.
\end{thebibliography}
\end{document}